\newcommand{\mono}{\rightarrowtail}
\DeclareMathOperator{\Ker}{Ker}
\DeclareMathOperator{\Coim}{Coim}
\DeclareMathOperator{\coim}{coim}
\DeclareMathOperator{\coker}{coker}
\newcommand{\mat}[1]{\left[\begin{smallmatrix} #1 \end{smallmatrix}\right]}
\newcommand{\scrA}{\mathscr{A}}
\newcommand{\scrE}{\mathscr{E}}
\newcommand{\calA}{\mathcal{A}}
\newcommand{\calD}{\mathcal{D}}
\newcommand{\calF}{\mathcal{F}}
\def\@strippedMR{}
\def\@scanforMR#1#2#3\endscan{%
  \ifx#1M\ifx#2R\def\@strippedMR{#3}%
  \else\def\@strippedMR{#1#2#3}%
  \fi\fi}
\renewcommand\MR[1]{\relax\ifhmode\unskip\spacefactor3000 \space\fi
  \@scanforMR#1\endscan
  MR\MRhref{\@strippedMR}{\@strippedMR}}
\newcommand\@dotsep{4.5}
\def\@tocline#1#2#3#4#5#6#7{\relax
  \ifnum #1>\c@tocdepth % then omit
  \else
    \par \addpenalty\@secpenalty\addvspace{#2}%
    \begingroup \hyphenpenalty\@M
    \@ifempty{#4}{%
      \@tempdima\csname r@tocindent\number#1\endcsname\relax
    }{%
      \@tempdima#4\relax
    }%
    \parindent\z@ \leftskip#3\relax \advance\leftskip\@tempdima\relax
    \rightskip\@pnumwidth plus1em \parfillskip-\@pnumwidth
    #5\leavevmode\hskip-\@tempdima #6\relax
    \leaders\hbox{$\m@th
      \mkern \@dotsep mu\hbox{.}\mkern \@dotsep mu$}\hfill
    \hbox to\@pnumwidth{\@tocpagenum{#7}}\par
    \nobreak
    \endgroup
  \fi}
\newtheoremstyle{mythm}% hnamei
 {6pt}% hSpace abovei
 {6pt}% hSpace belowi 
 {\itshape}% hBody fonti 
 {}% hIndent amounti1
 {\scshape}% hTheorem head fonti 
 {.}% hPunctuation after theorem headi
 {.5em}% hSpace after theorem headi2
 {}%
\newtheoremstyle{mydef}% hname
 {6pt}% hSpace abovei
 {6pt}% hSpace belowi 
 {}% hBody fonti 
 {}% hIndent amounti1
 {\scshape}% hTheorem head fonti 
 {.}% hPunctuation after theorem headi
 {.5em}% hSpace after theorem headi2
 {}%
\renewenvironment{proof}[1][\proofname]{\par
  \pushQED{\qed}%
  \normalfont \topsep6\p@\@plus6\p@\relax
  \trivlist
  \item[\hskip\labelsep
        \scshape
    #1\@addpunct{.}]\ignorespaces
}{%
  \popQED\endtrivlist\@endpefalse
}
\theoremstyle{mythm}
\newtheorem*{Lemnn}{Lemma}
\newtheorem*{Cornn}{Corollary}
\newtheorem*{Propnn}{Proposition}
\theoremstyle{mydef}
\newtheorem*{Remnn}{Remark}
\title[Partially abelian exact categories]%
{Remarks on partially abelian exact categories}
\author{Theo B\"uhler}
\address{}
\email{math@theobuehler.org}
\subjclass[2010]{18E10, 18G25}
\thanks{}
\date{February 25, 2013}
\newcommand{\AIC}{\ensuremath{(\mathrm{AIC})}}
\newcommand{\AICO}{\ensuremath{\AIC^\circ}}
\begin{document}

\begin{abstract}
  The purpose of this short and elementary note is to identify some classes of
  exact categories introduced in L. Previdi's thesis. Among other
  things we show: 
  \begin{enumerate}[label=(\roman{*})]
    \item
      An exact category is partially abelian exact if and only if it is
      abelian. 
    \item
      An exact category satisfies the axioms $\AIC$ and $\AICO$
      if and only if it is quasi-abelian in the sense of J.-P. Schneiders.
    \item
      An exact category satisfies $\AIC$ if and only if it is an additive
      category of the type considered by G. Laumon in his work on
      derived categories of filtered $\calD$-modules.
  \end{enumerate}
  In all of the above classes all morphisms have kernels and coimages
  and the exact structure must be given by all kernel-cokernel
  pairs. 
\end{abstract}

\maketitle

\section{Preliminary observations}

For every morphism $f \colon A \to B$ in an additive category there is
an associated automorphism $\mat{1 & 0 \\ f & 1}$ of $A \oplus
B$. It yields an isomorphism of short sequences
\[
\xymatrix{
  A \ar@{ >->}[r]^-{\mat{1 \\ 0}} \ar@{=}[d] &
  A \oplus B \ar@{->>}[r]^-{\mat{0 & 1}} 
  \ar[d]^{\mat{1 & 0 \\ f & 1}}_{\cong} &
  B \ar@{=}[d] \\
  A \ar@{ >->}[r]^-{\mat{1 \\ f}} &
  A \oplus B \ar@{->>}[r]^-{\mat{-f & 1}} &
  B.
}
\]
The morphism $\mat{1 \\ f}$ can be interpreted as the
graph of $f$. It has a left inverse given by the projection onto the
first coordinate, while composition with the
projection onto the second coordinate yields the
morphism $f = \mat{0 & 1}\mat{1 \\ f}$. In particular,
every morphism in an additive category is the composition of a split
monic followed by a split epic.

The following lemma simply states that the intersection of the graph of
$f$ with the graph of the zero morphism coincides with the kernel of
$f$. The verification is straightforward.

\begin{Lemnn}
  Let $f \colon A \to B$ be a morphism in an additive category.
  Consider the diagram of solid arrows below
  \[
  \xymatrix{
    K \ar@{.>}[r]^{k} \ar@{.>}[d]_{k} \ar@{}[dr]|{\mathrm{PB}} &
    A \ar@{ >->}[d]^{\mat{1 \\ f}} \ar[r]^{f} &
    B \ar@{=}[d] \\
    A \ar@{ >->}[r]^-{\mat{1 \\ 0}} & 
    A \oplus B \ar@{ ->>}[r]^-{\mat{0 & 1}} & 
    B.
  }
  \]
  The pull-back square on the left exists if
  and only if $f$ has a kernel $k \colon K \mono A$. \qed
\end{Lemnn}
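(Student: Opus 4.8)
The plan is to make explicit the pull-back of $\mat{1 \\ f}$ along $\mat{1 \\ 0}$ whenever it exists and to recognise it as $\Ker f$. The first step is to unwind commutativity of the left-hand square: post-composing with the two coordinate projections of $A \oplus B$, a cone on the cospan $\bigl(\mat{1 \\ f}, \mat{1 \\ 0}\bigr)$ with apex $T$ amounts to a pair of morphisms $u, v \colon T \to A$ with $u = v$ and $fu = 0$ --- equivalently, to a single morphism $u \colon T \to A$ with $fu = 0$. This bookkeeping is the whole content of the lemma, and in particular it explains why the two dotted arrows emanating from $K$ bear the same name.

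For the ``if'' direction, assume $k \colon K \mono A$ is a kernel of $f$. Then $fk = 0$, so the square whose two legs are both $k$ commutes; and by the previous paragraph any cone $(u,v)$ with apex $T$ satisfies $u = v$ and $fu = 0$, so $u$ factors uniquely as $kw$. This $w$ is manifestly the unique morphism $T \to K$ compatible with both projections, so the square is a pull-back.

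For the ``only if'' direction, assume the left square is a pull-back with apex $K$ and legs $k_{1}$ (top) and $k_{2}$ (left). Commutativity forces $k_{1} = k_{2} =: k$ and $fk = 0$. Since $\mat{1 \\ 0}$ is a split monomorphism, its pull-back $k$ along $\mat{1 \\ f}$ is again monic. Finally, given any $g \colon T \to A$ with $fg = 0$, the pair $(g,g)$ is a cone on the cospan, hence factors uniquely through $k$; combined with $fk = 0$ and the monicity of $k$, this is precisely the universal property of $\Ker f$.

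I do not expect any genuine obstacle: every step is an immediate translation of definitions. The only two points requiring a little care are keeping the coordinates straight when passing between ``cone on the cospan'' and ``morphism annihilated by $f$'', and invoking the standard fact that pull-backs preserve monomorphisms in the second half of the argument.
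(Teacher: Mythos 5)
Your argument is correct and is essentially the paper's (omitted) verification: unwind commutativity via the two coordinate projections to see that a cone over the cospan is the same as a morphism killed by $f$, and then match the two universal properties in both directions. The only cosmetic difference is your explicit ``cone'' bookkeeping and the remark that $k$ is monic as a pull-back of the split monic $\mat{1 \\ 0}$ --- which is fine, though not strictly needed, since the pull-back's uniqueness clause already yields the uniqueness required of a kernel once the two legs are identified.
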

\if{0}
\begin{proof}
  This is a straightforward diagram chase.
  Suppose $f$ has a kernel $k \colon K \to A$. Set $i = k$ and observe that the
  diagram on the left commutes. Let us check that it is a pull-back.
  If $\alpha,\beta \colon X \to A$ are such that $\mat{1 \\ f} \alpha
  = \mat{1 \\ 0}\beta$ then $\alpha = \beta$ and $f\alpha = 0$. The
  latter equation implies that $\alpha = k\alpha'$ for a unique
  $\alpha' \colon X \to K$ and thus the left hand square is a
  pull-back.

  Conversely, suppose the pull-back diagram exists. Since pull-backs
  of monics are monic, $i$ and $k$ are monic. Moreover, $\mat{1 \\
    f}k = \mat{1 \\ 0}i$ implies $k = i$ and $fk = 0$. 
  To check that $k$ is a kernel of $f$, suppose $\alpha
  \colon X \to A$ is such that $f\alpha = 0$. Then $\mat{1 \\ f}\alpha
  = \mat{1 \\ 0}\alpha$ and the pull-back property of the left hand
  square yields a unique $\alpha' \colon X \to K$ such that $\alpha =
  k \alpha'$. 
\end{proof}
\fi

\begin{Cornn}
  An additive category admits pull-backs of all pairs of split monics if
  and only if it has kernels (if and only if it is finitely complete). \qed
\end{Cornn}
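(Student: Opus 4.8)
The plan is to read the Corollary off the Lemma, using the fact that in an additive category finite completeness is the same as having kernels. Concretely, I would establish the circle of implications
\[
\text{finitely complete} \;\Longrightarrow\; \text{pull-backs of pairs of split monics exist} \;\Longrightarrow\; \text{kernels exist} \;\Longrightarrow\; \text{finitely complete},
\]
of which the first is a triviality (a finitely complete category has all pull-backs) and the last is standard.

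For the implication ``pull-backs of pairs of split monics $\Rightarrow$ kernels'', let $f \colon A \to B$ be an arbitrary morphism. The preliminary observation shows that the graph $\mat{1 \\ f} \colon A \to A \oplus B$ is split monic, a left inverse being the first coordinate projection $\mat{1 & 0}$, and the coordinate inclusion $\mat{1 \\ 0} \colon A \to A \oplus B$ is split monic as well. These two morphisms share the codomain $A \oplus B$, so by hypothesis their pull-back exists; the Lemma then provides a kernel of $f$. As $f$ was arbitrary, the category has kernels.

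For the implication ``kernels $\Rightarrow$ finitely complete'' I would recall that an additive category has all finite products by definition, so only equalizers remain to be produced, and the equalizer of a parallel pair $f, g \colon A \to B$ is the kernel of the difference $f - g$. Finite products together with equalizers yield all finite limits, in particular all pull-backs, which closes the circle and simultaneously gives the parenthetical equivalence with finite completeness.

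I do not anticipate a genuine obstacle: the whole argument is carried by the Lemma and by the split-monic--split-epic factorisation recorded just before it. The only point worth stating rather than leaving implicit is that, for additive categories, ``finitely complete'' collapses to ``has kernels'' precisely because the finite products are already present.
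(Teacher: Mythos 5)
Your argument is correct and is exactly the intended one: the paper leaves the Corollary as an immediate consequence of the Lemma, since both $\mat{1 \\ f}$ and $\mat{1 \\ 0}$ are split monic, so pull-backs of pairs of split monics yield all kernels, while the converse follows from the standard fact that an additive category with kernels has equalizers (of $f,g$ as $\ker(f-g)$) and hence is finitely complete. Nothing is missing.
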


\section{Intrinsic characterization of exact categories satisfying
\texorpdfstring{$\AIC$}{AIC}}

Previdi's additional axiom $\AIC$ for exact categories
\cite[3.17]{previdi-sato} states that pull-backs of admissible
monics along admissible monics exist and are admissible monics. Since
split monics with cokernels are always admissible, we infer from $\AIC$ and the
previous section that the underlying additive category must have
kernels. Moreover, $\AIC$ and the lemma imply that all kernels are
admissible monics.

Suppose $f$ has a cokernel $c$. Then $\ker{c}$ is an admissible monic
and $c = \coker{\ker{c}}$ implies that $c$ is an admissible
epic. Therefore the class of cokernels is equal to the class of
admissible epics.

This discussion establishes the necessity part of the following
characterization of exact categories satisfying $\AIC$:

\begin{Propnn}
  $(\scrA,\scrE)$ is an exact category satisfying $\AIC$ if and only if:
  \begin{enumerate}[label=(\roman{*})]
    \item
      Every morphism in $\scrA$ has a kernel.
    \item
      The push-out of a kernel along an arbitrary morphism exists and
      is a kernel.
    \item 
      Cokernels are stable under pull-backs along arbitrary
      morphisms.%\footnote{The pull-back always exists in view of (i).}
    \item
      All kernels are admissible monics and all cokernels are admissible
      epics and $\scrE$ is the class of all kernel-cokernel pairs in $\scrA$. 
  \end{enumerate}
  In particular, if $\scrA$ admits an exact
  structure satisfying $\AIC$, it is the unique maximal exact
  structure on $\scrA$. Moreover, kernels are stable under pull-backs along arbitrary
  morphisms.
\end{Propnn}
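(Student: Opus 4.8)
Necessity is what the preceding discussion establishes: there (i) was checked, and the admissible monics, resp. epics, were identified with the kernels, resp. cokernels, so that (ii) and (iii) are just the push-out and pull-back axioms of an exact category; the last clause of (iv) is then immediate, since every kernel-cokernel pair $(i,p)$ has $i=\ker p$ a kernel, hence admissible, so that $(i,p)=(i,\coker i)$ is a conflation. It remains to prove the converse and the final two assertions, so assume from now on (i)--(iv); by (iv) we identify ``kernel'' with ``admissible monic'' and ``cokernel'' with ``admissible epic''.

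For the converse the plan is to verify the axioms of an exact category for $(\scrA,\scrE)$ and then $\AIC$. By (i) and the corollary of the previous section, $\scrA$ is finitely complete; in particular all pull-backs exist. Since $\scrE$ is the class of \emph{all} kernel-cokernel pairs and every $\id_A$ is both a kernel and a cokernel, closure of $\scrE$ under isomorphism and the two ``identity'' axioms hold for free, the push-out axiom is precisely (ii), and the pull-back axiom is precisely (iii). What remains is to show that a composite of two kernels is again a kernel and, dually, that a composite of two cokernels is again a cokernel.

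This is where the real work --- and the main obstacle --- lies: not being entitled yet to cite the theory of exact categories, one must recover a fragment of it by hand. Given kernels $i\colon A\mono B$ and $j\colon B\mono C$, put $c=\coker i\colon B\to D$ and form the push-out $P$ of $j$ along $c$, which by (ii) exists and carries a kernel $\delta\colon D\to P$; write $\gamma\colon C\to P$ for the other structure map. One first re-establishes the two standard facts about such a push-out: that the square is also a pull-back, and that $\delta$ extends to a conflation $D\xrightarrow{\delta}P\to Q$ with $Q$ a cokernel of $j$, compatibly with $\gamma$. Both are elementary, flowing from (ii), (iv), the identity $j=\ker(\coker j)$, and repeated factoring through cokernels, together with the facts that $\delta$ is monic and that cokernels are epic. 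Granting them, a two-line chase gives $ji=\ker\gamma$: one has $\gamma j i=\delta c i=0$, and if $\gamma x=0$ then $\gamma x=\delta\cdot 0$, so the pull-back property yields $w$ with $jw=x$ and $cw=0$, whence $w=iy$ for a unique $y$ and $x=(ji)y$. Thus $ji$ is a kernel. Interchanging push-outs with pull-backs and (ii) with (iii) throughout gives the dual assertion, so $(\scrA,\scrE)$ is an exact category. (One can shorten this step by appealing to known results on maximal exact structures on additive categories.)

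It remains to dispose of $\AIC$, the ``moreover'', and the ``in particular''. Since $\scrA$ is finitely complete, the pull-back of a kernel $k\colon K\mono B$ along an arbitrary $f\colon A\to B$ exists, and writing $q=\coker k$ a one-line chase identifies the pull-back map with $\ker(qf)$; hence it is a kernel, and so an admissible monic by (iv). This yields at once the ``moreover'' (kernels are stable under pull-backs along arbitrary morphisms) and, specialising $f$ to an admissible monic, the axiom $\AIC$. Finally, every exact structure on $\scrA$ consists of kernel-cokernel pairs, hence lies inside $\scrE$ by (iv), while every exact structure satisfying $\AIC$ coincides with $\scrE$ by (iv); therefore $\scrE$ is the unique maximal exact structure on $\scrA$.
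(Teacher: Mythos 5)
Your proof is correct, and its core is the same as the paper's: necessity comes from the preceding discussion, the crucial sufficiency step is the push-out of the second kernel $j$ along $\coker i$, and $\AIC$ (together with the ``moreover'') is obtained by identifying the pull-back of a kernel $k$ along $f$ with $\ker\bigl((\coker k)f\bigr)$. The differences are modest but real. The paper invokes Keller's reduction (his A.1) so that only the composition of kernels needs checking, and then verifies $r=\coker(ji)$ and $ji=\ker r$ directly from the push-out property; you instead avoid Keller by checking both composition axioms, handling the dual one by interchanging (ii) and (iii) --- which is legitimate here even though the hypotheses are not literally self-dual, because (i) supplies all the kernels the dual argument needs and finite completeness (your corollary) supplies the pull-backs --- and you route the argument through the two auxiliary facts that the push-out square along a kernel is also a pull-back and that it induces a conflation $D\mono P\epi Q$ with $Q=\coker j$ compatibly with $\gamma$. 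You only sketch these two facts, but they do follow from exactly the ingredients you list ($\coker\delta$ exists by (ii), the push-out structure maps are jointly epic, $\epsilon\gamma$ is then seen to be $\coker j$, and $j=\ker(\coker j)$ plus $\delta$ monic gives the pull-back property), so there is no gap; spelling them out would bring your write-up to essentially the same length as the paper's argument, the trade-off being self-containedness versus the paper's shorter Keller-assisted bookkeeping. The treatment of maximality and uniqueness of the exact structure agrees with the paper's.
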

\begin{proof}
  Suppose $(\scrA,\scrE)$ satisfies (i), (ii), (iii) and (iv). 
  Note that (ii) implies that every kernel has a cokernel and with
  (i) this shows that every morphism has a coimage
  (form the push-out of the kernel along zero).
  %Every kernel is the kernel
  %of its cokernel and every cokernel is the cokernel of its kernel.
  
  In order to check that $\scrE$ is an exact structure,
  Keller~\cite[A.1]{MR1052551% Keller, Chain complexes stable categories, 1990, Manuscripta Math. 67, 379
  }
  shows that it is enough to verify that the composition of kernels is again a
  kernel.

  To this end, let $(i,p)$ and 
  $(j,q)$ be short exact such that $ji$ is defined.
  We need to prove that $ji$ is a kernel. Form the push-out under $j$
  and $p$ to obtain the diagram
  \[
  \xymatrix{
    A' \ar@{ >->}[r]^{i} \ar@{=}[d] &
    A \ar@{->>}[r]^-{p} \ar@{ >->}[d]^{j}
    \ar@{}[dr]|{\mathrm{PO}} &
    A'' \ar@{ >.>}[d]^{j''}  \\
    A' \ar[r]^{ji} & 
    B \ar@{.>}[r]^-{r} \ar@{->>}[d]^{q} & 
    B'' \ar@{.>}[d]^{q''} \\
    & C \ar@{=}[r]
    & C
  }
  \]
  where $q''$ is uniquely determined by $q''r = q$ and $q''j'' =
  0$. Since $q'' = \coker{j''}$ we have $(j'',q'') \in \scrE$. Using
  the push-out property one checks that $r = \coker(ji)$ and finally
  one checks that $ji = \ker r$ so that $(ji,r) \in \scrE$.

  It remains to verify $\AIC$. Consider the following diagram in which
  $(b,b') \in \scrE$ and $f$ is arbitrary:
  \[
  \xymatrix{
    A' \ar@{ >.>}[r]^{a} \ar@{.>}[d]^{f'} & 
    A \ar@{.>>}[r]^{a'} \ar[d]^{f} &
    A'' \ar@{.>}[d]^{f''} \\
    B' \ar@{ >->}[r]^{b} &
    B \ar@{->>}[r]^{b'} &
    B''.
  }
  \]
  Define $(a,a') = (\ker(b'f),\coim(b'f))$. Since $b'fa = 0$, there
  are unique $f'$ and $f''$ making the diagram commutative.
  % Using that $a'$ admits pull-backs along arbitrary
  % morphisms one checks first that $f''$ is monic.
  If $\alpha \colon X
  \to A$ and $\beta' \colon X \to B'$ are such that $f \alpha = b\beta'$ 
  % then $f''a'\alpha = b'f\alpha = b'b\beta = 0$ so that $a'\alpha = 0$
  % and thus
  then $b'f\alpha = b'b\beta = 0$ and since $a = \ker(b'f)$,
  there is a unique morphism $\alpha' \colon X
  \to A'$ such that $a\alpha' = \alpha$. Because $b$ is monic we
  conclude $\beta' = f'\alpha'$. Thus, the left hand square is
  a pull-back and $\AIC$ follows.
\end{proof}

\begin{Remnn}
  Exact categories satisfying $\AIC$ are studied in
  \cite[(1.3.0), p.160ff]{MR726427% Laumon, Sur la cat\'egorie d\'eriv\'ee des, 1983, 1016, 151, in: Algebraic geometry (Tokyo/Kyoto, 1982), Springer
  }.
\end{Remnn}

\begin{Remnn}
  Since admissible monics have cokernels, every morphism has in
  addition a coimage.
  \[
  \vcenter{
    \xymatrix{
      & A \ar[rr]^{f} \ar@{->>}[dr]_{p=\coim{f}} & & B \\
      \Ker{f} \ar@{ >->}[ur]^{\ker{f}} & & \Coim{f} \ar@{^(->}[ur]_{m}
    }
  }
  \qquad \qquad
  \vcenter{
    \xymatrix{
      Y \ar[r]^-{y} \ar@{->>}[d]^-{q} \ar@{}[dr]|{\mathrm{PB}}
      & A \ar@{->>}[d]^-{p} \\
      X \ar[r]^-{x} & \Coim{f}.
    }
  }
  \]
  The morphism $m\colon \Coim{f} \to B$ is monic: if $mx = 0$,
  form the pull-back over $x$ and $p$, notice that $fy = 0$ so that $y$
  factors through $\Ker{f}$. Therefore $0 = py = xq$, so $x = 0$ since
  $q$ is epic. [A variant of this argument can be used to give a
  direct proof of \cite[Lemma~3.18]{previdi-sato}.]
\end{Remnn}

\section{Exact categories satisfying \texorpdfstring{$\AIC$}{AIC}
  and \texorpdfstring{$\AICO$}{AIC°}}

The characterization of exact categories satisfying $\AIC$ in the
previous section is very nearly self-dual. The only missing piece is the existence of
cokernels. Thus:

\begin{Propnn}
  $(\scrA, \scrE)$ is an exact category with $\AIC$ and $\AICO$ if
  and only if:
  \begin{enumerate}[label=(\roman{*})]
    \item
      Every morphism in $\scrA$ has a kernel and a cokernel.
    \item
      Kernels are stable under push-outs along arbitrary morphisms.
    \item 
      Cokernels are stable under pull-backs along arbitrary morphisms.
    \item
      $\scrE$ is the class of all kernel-cokernel pairs in $\scrA$. 
  \end{enumerate}
  These are precisely the quasi-abelian categories of 
  Schneiders~\cite{MR1779315% Schneiders, Quasi abelian categories sheaves, 1999, M\'em. Soc. Math. Fr. (N.S.) vi+134
  } and the almost abelian categories of Rump~\cite{MR1856638% Rump, Almost abelian categories, 2001, Cahiers Topologie G\'eom. Diff\'erentielle Cat\'eg. 42, 163
}. \qed
\end{Propnn}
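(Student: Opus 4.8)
The plan is to deduce this proposition essentially as a formal consequence of the previous one. By the characterization of exact categories satisfying $\AIC$, the axiom $\AIC$ is equivalent to conditions (i')--(iv') of the previous proposition: every morphism has a kernel, push-outs of kernels exist and are kernels, cokernels are stable under pull-backs, $\scrE$ is the class of all kernel-cokernel pairs, and (as a consequence) kernels are stable under pull-backs. Dualizing the same characterization gives that $\AICO$ is equivalent to: every morphism has a cokernel, pull-backs of cokernels exist and are cokernels, kernels are stable under push-outs, $\scrE$ is the class of all kernel-cokernel pairs, and cokernels are stable under push-outs. So I would first spell out that $\AIC \wedge \AICO$ is equivalent to the conjunction of these two lists.

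**Matching the two lists against (i)--(iv).**

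The second step is bookkeeping: intersecting the two lists collapses neatly onto conditions (i)--(iv) of the present proposition. Condition (i) here (kernels \emph{and} cokernels exist) is exactly the first entry of each list combined. Condition (ii), stability of kernels under push-outs, comes verbatim from the $\AICO$ list; note one must observe that "push-out of a kernel along an arbitrary morphism exists and is a kernel" (from the $\AIC$ list, clause (ii) of the previous proposition) together with the $\AICO$ consequence that kernels are stable under push-outs are the same statement once existence of cokernels is granted — the push-out exists because the kernel has a cokernel, hence is an admissible monic, and exact categories have push-outs of admissible monics. Condition (iii), stability of cokernels under pull-backs, is clause (iii) of the previous proposition directly. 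Condition (iv) is the common clause (iv). Conversely, given (i)--(iv), the existence of cokernels lets one promote each kernel to an admissible monic (its cokernel) and each cokernel to an admissible epic, so both lists (i')--(iv') and their duals are satisfied, hence $\AIC$ and $\AICO$ both hold by the previous proposition and its dual. This is the one place where the hypothesis "every morphism has a cokernel" (beyond what $\AIC$ alone gives) does real work — it is, as the text says, "the only missing piece" making the characterization self-dual.

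**The identification with quasi-abelian and almost abelian categories.**

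For the final clause I would simply cite the definitions. A quasi-abelian category in the sense of Schneiders is, by definition, an additive category with kernels and cokernels in which cokernels are stable under pull-back and kernels are stable under push-out; Rump's almost abelian categories are defined by the same closure conditions. These are precisely (i)--(iii) above, and the theorem of Schneiders (and independently Rump) that such a category carries a canonical exact structure whose conflations are exactly the kernel-cokernel pairs supplies (iv). Thus the characterization (i)--(iv) is literally the definition of quasi-abelian together with the standard fact identifying its exact structure, so the two notions coincide. The proof is therefore complete modulo these references; there is no genuine obstacle, only the routine verification in the middle step that the clause about push-outs of kernels in the previous proposition and its dual counterpart agree once cokernels are assumed to exist. $\qed$
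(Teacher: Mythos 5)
Your proposal is correct and follows essentially the same route as the paper, which proves this proposition simply by observing that the characterization of $\AIC$ in the previous section is ``very nearly self-dual,'' with existence of cokernels as the only missing piece, and then cites Schneiders and Rump for the identification with quasi-abelian (almost abelian) categories. Your explicit bookkeeping---combining the $\AIC$ list with its dual, promoting kernels to admissible monics via their cokernels, and invoking the standard fact that kernel-cokernel pairs form the exact structure of a quasi-abelian category---is exactly the argument the paper leaves implicit behind its \textsc{qed}.
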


\section{Partially abelian exact categories are abelian}

An exact category is called \emph{partially abelian exact} if every morphism
which is the composition of an admissible monic followed by an admissible
epic has a factorization as an admissible epic followed by an
admissible monic:
\[
\xymatrix{
  & C \ar@{->>}[dr]^{e} \\
  A \ar@{ >->}[ur]^{m} \ar@{.>>}[dr]_{\exists p} \ar[rr]^f & & B \\
  & I \ar@{ >.>}[ur]_{\exists i}
}
\]
that is, $f= em$ with $m$ admissible monic and $e$ admissible epic
implies $f= ip$ with $p$ admissible epic and $i$ admissible
monic. This factorization shows that $\ker{f} = \ker{p}$ and
$\coker{f} = \coker{i}$ and thus $I$ is both coimage and image of $f$.

From the first section we know that every morphism $f \colon A \to B$ in
an additive category can be written as the composition of a split
monic followed by a split epic $f = \mat{0 & 1} \mat{1 \\ f}$.
Thus, in a partially abelian exact category every morphism $f$ factors
as $f = ip$ via admissible epic $p$ followed by an admissible monic $i$ 
and hence every $f$ has a kernel and a cokernel.

Consider the factorization $f = ip$ of a monic $f$ into an admissible
epic followed by an admissible monic. Then $p$ is monic
and since monic cokernels in an additive category are
isomorphisms, $p$ is an
isomorphism from $f$ to the admissible monic $i$, so $f$ is an
admissible monic, and hence it is a kernel. Dually, every epic is an admissible
epic and hence it is a cokernel.

%The above discussion shows the non-trivial implication of:

\begin{Propnn}
  A category is partially abelian exact if and only if it is
  abelian. \qed
\end{Propnn}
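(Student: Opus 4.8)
The statement should be read as: $(\scrA,\scrE)$ is a partially abelian exact category if and only if $\scrA$ is abelian and $\scrE$ is the class of all kernel--cokernel pairs. The plan is to prove the two implications separately, the backward one being essentially immediate and the forward one amounting to assembling the facts already established in the paragraphs preceding the proposition.

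For ``abelian $\Rightarrow$ partially abelian exact'', equip the abelian category $\scrA$ with its maximal exact structure, so that $\scrE$ is the class of all short exact sequences; then the admissible monics are precisely the monomorphisms and the admissible epics precisely the epimorphisms. Every morphism $f$ in $\scrA$ has its usual factorization $f = \mu\pi$ with $\pi$ epic and $\mu$ monic, which is already a factorization as an admissible epic followed by an admissible monic, so the axiom holds. (That no other exact structure on an abelian category can be partially abelian exact follows from the preceding paragraphs, which force every monic to be admissible.)

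For the forward implication I would collect what the discussion above has established for a partially abelian exact $(\scrA,\scrE)$: the category $\scrA$ is additive, every morphism has a kernel and a cokernel, every monic is an admissible monic (hence a kernel) and every epic is an admissible epic (hence a cokernel), and every $f\colon A\to B$ factors as $f = ip$ with $p$ an admissible epic and $i$ an admissible monic. The point is to recognize this as the abelian axioms. Since $i$ is monic one has $\ker p = \ker(ip) = \ker f$, and since $p$ is an admissible epic it equals $\coker(\ker p) = \coker(\ker f)$, so $p$ is, up to the canonical isomorphism, the coimage of $f$; dually, since $p$ is epic one has $\coker i = \coker f$, and since $i$ is an admissible monic it equals $\ker(\coker i) = \ker(\coker f)$, so $i$ is the image of $f$. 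Hence $f = ip$ \emph{is} the canonical coimage--image factorization, and in particular the comparison morphism $\coim f \to \im f$ is an isomorphism. An additive category in which every morphism has a kernel and a cokernel and all these comparison morphisms are isomorphisms is abelian; equivalently one may invoke the characterization of abelian categories as additive categories with kernels and cokernels in which every monic is a kernel and every epic is a cokernel, both forms having been verified. Finally, the admissible monics are exactly the kernels and the admissible epics exactly the cokernels, so $\scrE$ is precisely the class of all kernel--cokernel pairs.

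I do not expect a genuine obstacle: everything rests on the factorization supplied by partial abelian exactness together with the elementary fact that in an exact category an admissible epic is the cokernel of its kernel (dually for admissible monics). The one step that wants a little care is the identification of $f = ip$ with the coimage--image factorization --- that is, checking that kernels pass through the monic $i$ and cokernels through the epic $p$ as claimed --- together with the bookkeeping showing that $\scrE$ is forced to be the maximal exact structure.
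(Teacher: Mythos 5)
Your proposal is correct and follows essentially the same route as the paper: the facts you ``collect'' (every morphism acquires a kernel and cokernel via the split factorization $f=\mat{0 & 1}\mat{1\\ f}$, every monic is a kernel and every epic a cokernel because a monic admissible epic is an isomorphism) are exactly the paper's argument preceding the proposition, and your final assembly via the standard characterization of abelian categories, together with the observation that $\scrE$ must be the class of all kernel--cokernel pairs, matches what the paper leaves implicit. Your spelled-out converse and the coimage--image comparison formulation are fine but add nothing beyond the paper's approach.
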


\section{Discussion of Previdi's Theorem 3.24 and Proposition 3.22}

The facts that for an exact category we have 
\begin{gather*}
  \text{partially abelian exact} \iff \text{abelian} \\
  \text{$\AIC$ and $\AICO$} \iff \text{quasi-abelian}
\end{gather*}
contradict Theorem~3.24 in Previdi~\cite{previdi-sato}. The reason is
that the proof of the theorem is based on the incorrect Proposition~3.22. 

\begin{Propnn}
  Consider the additive category $\mathsf{Ban}$ of Banach spaces and
  bounded linear maps. Equip it   with the usual maximal exact structure consisting of the short
  sequences whose underlying sequence of vector spaces is exact. Then
  $\mathsf{Ban}$ satisfies axioms $\AIC$ and $\AICO$.
  On the other hand, $\mathsf{Ban}$ is not partially abelian exact.
\end{Propnn}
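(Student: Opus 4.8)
The plan is to treat the two assertions separately. For the first --- that $\mathsf{Ban}$ satisfies $\AIC$ and $\AICO$ --- I would invoke the intrinsic characterization of such exact categories established above and simply check its four conditions. The open mapping theorem (a bounded bijection of Banach spaces is an isomorphism) identifies the admissible monics as the bounded injections with closed range, the admissible epics as the bounded surjections, and confirms that $\scrE$ is precisely the class of kernel--cokernel pairs, so condition (iv) holds; and every bounded map $f\colon A\to B$ has the kernel $f^{-1}(0)$ and the cokernel $B/\overline{f(A)}$, so condition (i) holds. For condition (ii), I would realize the push-out of an arbitrary $g\colon K\to C$ along an admissible monic $k\colon K\to A$ as $(A\oplus C)/N$ with $N=\{(kx,-gx):x\in K\}$: because $k$ is bounded below, $N$ is closed, and the resulting map $C\to (A\oplus C)/N$ is injective with image $(k(K)\oplus C)/N$, which is closed since $k(K)$ is closed in $A$; thus this map is again an admissible monic. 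Condition (iii) follows by the dual computation: the pull-back of $f\colon C\to A''$ along an admissible epic $q\colon A\to A''$ is the closed subspace $\{(a,c):qa=fc\}$ of $A\oplus C$, whose projection to $C$ is surjective because $q$ is. Since the four conditions hold, the characterization gives $\AIC$ and $\AICO$. (One may shortcut this paragraph entirely by citing that $\mathsf{Ban}$ is quasi-abelian.)

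For the second assertion I would use the observations about partially abelian exact categories proved above: there it is shown that in any partially abelian exact category every monomorphism is an admissible monic. It therefore suffices to exhibit a monomorphism of $\mathsf{Ban}$ that is not an admissible monic, i.e.\ whose range is not closed. I would take the inclusion $\iota\colon\ell^1\hookrightarrow\ell^2$: it is bounded because $\|\cdot\|_2\le\|\cdot\|_1$ and it is injective, hence monic, but its range is the subspace $\ell^1$ of $\ell^2$, which is dense (it contains all finitely supported sequences) and proper (it misses $(1/n)_n$), hence not closed. So $\iota$ witnesses that $\mathsf{Ban}$ is not partially abelian exact. Equivalently: the same $\iota$ is simultaneously monic and epic (it has dense range) without being an isomorphism, so $\mathsf{Ban}$ is not abelian, and the characterization ``partially abelian exact $\iff$ abelian'' finishes the argument.

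Almost everything here is bookkeeping with standard Banach-space facts, and the single input with any content is the open mapping theorem, needed both to identify the admissible morphisms and --- the one step I would expect to require a moment's care --- to see that the push-out of a closed-range injection along an arbitrary morphism again has closed range (i.e.\ that $N$ above is closed and that $k(K)\oplus C$ is closed in $A\oplus C$). If one prefers to avoid verifying conditions (ii) and (iii) by hand, the cleanest route is to cite the well-known fact that $\mathsf{Ban}$ is quasi-abelian and appeal to the characterization of exact categories satisfying $\AIC$ and $\AICO$ proved above, after which only the explicit example $\ell^1\hookrightarrow\ell^2$ together with the characterization of partially abelian exact categories are needed.
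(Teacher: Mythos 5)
Your argument is correct, but it takes a genuinely different route on the positive half. The paper verifies $\AIC$ and $\AICO$ directly from the axioms as stated: since $\AIC$ only asks for pull-backs of admissible monics \emph{along admissible monics}, it suffices to observe that the pull-back of two closed subspaces $U,V\subseteq E$ is the intersection $U\cap V$, which is obviously closed in each, and dually $\AICO$ follows because push-outs in $\mathsf{Ban}$ are quotients of push-outs of vector spaces and compositions of surjections are surjective --- no open mapping theorem and no stability along arbitrary morphisms is needed there. You instead verify the four conditions of the characterization from section~3 (existence of kernels and cokernels, stability of kernels under push-out and of cokernels under pull-back along \emph{arbitrary} morphisms, and $\scrE$ being all kernel--cokernel pairs), i.e.\ you essentially reprove that $\mathsf{Ban}$ is quasi-abelian; this is the alternative the paper explicitly mentions and declines (``one could appeal to the known fact that $\mathsf{Ban}$ is quasi-abelian\dots but it is just as easy to verify $\AIC$ and $\AICO$ directly''). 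Your stronger verification buys more information (stability along all morphisms) at the cost of the open-mapping-theorem bookkeeping --- closedness of $N=\{(kx,-gx)\}$ and of $k(K)\oplus C$, and the identification of injective closed-range maps as kernels and surjections as cokernels --- all of which you carry out correctly. For the negative half the two arguments agree in substance: the paper notes that morphisms without closed range are incompatible with the factorization forced by partial abelianness, while you instantiate this with the monic $\ell^1\hookrightarrow\ell^2$ and the fact (from section~4) that in a partially abelian exact category every monic is an admissible monic, or equivalently with the bimorphism-not-isomorphism argument via ``partially abelian exact $\iff$ abelian''; this is the same example the paper uses in its discussion of Previdi's Proposition~3.22.
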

\begin{proof}
  One could appeal to the known fact that $\mathsf{Ban}$ is
  quasi-abelian and the proposition in section~3,
  but it is just as easy to verify $\AIC$ and $\AICO$ directly:

  Admissible monics are precisely the injective maps with
  closed range. Every admissible monic is isomorphic to the inclusion
  of a closed subspace. Given two closed subspaces $U$ and $V$ of a
  Banach space $E$, their pull-back is the intersection $U \cap V$
  which is clearly closed in both $U$ and $V$, and
  $(\operatorname{AIC})$ follows.

  Admissible epics are precisely the surjective maps. The push-out of
  two morphisms in $\mathsf{Ban}$ is a quotient of their push-out in the category of
  vector spaces, so $(\operatorname{AIC})^\circ$ follows from the facts
  that $\mathsf{Vect}$ satisfies $(\operatorname{AIC})^\circ$ and that the
  composition of two surjective maps is surjective.

  Since there are morphisms in $\mathsf{Ban}$ which do not have closed
  range, it cannot be partially abelian exact.
\end{proof}

To see what goes wrong in the proof of Proposition~3.22, consider
a bimorphism (monic-epic)  $f \colon A \to B$ which is not an
isomorphism, e.g. the inclusion $\ell^1 \subset \ell^2$ in
$\mathsf{Ban}$.
In the following diagram the first two rows and columns are split exact:
\[
\xymatrix{
  0 \ar@{ >->}[r] \ar@{ >->}[d] \ar@{}[dr]|{\text{PB}} &
  A \ar@{->>}[r]^{1} \ar@{ >->}[d]^{\mat{1 \\ f}} &
  A \ar[d]^{f} \\
  A \ar@{ >->}[r]^-{\mat{1 \\ 0}} \ar@{->>}[d]^{1} &
  A \oplus B \ar@{->>}[r]^-{\mat{0 & 1}} \ar@{->>}[d]_-{\mat{-f & 1}}&
  B \\
  A \ar[r]^-{-f} & B 
}
\]
Since $f$ is monic we have $0 = \Ker{f}$. It follows from the lemma in
the first section that the upper left corner is a
pull-back. Dually, the push-out of the two morphisms in the lower
right corner is $0$.

The Quillen embedding preserves pull-backs, so
the morphism $f$ is still monic in the abelian envelope
$\calF$: the left square of a morphism of short exact sequences in an
abelian category is a pull-back if and only if the rightmost arrow is
monic.

Of course, $f$ cannot be the kernel of $B \to 0$ (neither in the
abelian envelope $\calF$ nor in $\calA$) since it is
not an isomorphism. This gives an explicit example showing that the
Quillen embedding preserves neither epics ($f$ is epic in $\calA$ but
it is not epic in $\calF$), nor
cokernels (the cokernel of $f$ is no longer $B \to 0$), nor push-outs
(not even those involving only admissible epics), 
as seems to be assumed in the proof of Previdi's Proposition~3.22. To
reiterate: \emph{it does not follow that} $t \in \calA$, contrary to
what is claimed in the proof.

Note that the above diagram can be substituted as diagram $(3.21)$ by
adding the cokernel $e'' \colon B \to t \in \calF$ of $f$ in the abelian envelope and the
corresponding factorization $j''$ of $\mat{0 & e''}\colon A \oplus B
\to t$ over $\mat{-f & 1}$.

\appendix
\section*{Postscriptum}

This is an unmodified version of a note sent to Braunling, Groechenig and
Wolfson.
It was written late 2012 or early 2013 when I answered some questions that
arose in their work on Tate objects in exact categories.
At that point I also informed Previdi of these results.
While others have since rediscovered variants of these ideas, there
is still no citable reference for this material, so I made this publicly
available on Braunling's request.

Br\"ustle pointed out to the author that Hassoun and Roy~\cite{HR19}
independently introduced $AI$-categories 
(pre-abelian exact categories satisfying $\AIC$).
Br\"ustle, Hassoun, Shah, Tatar and Wegner showed that $AI$-categories
and quasi-abelian categories are the same, see~\cite[Theorem 1.3]{BHT20}.
This is a variant of the results in sections 2 and 3 of this note.
Readers who found this simple note interesting will find more
in \cite{BHT20} and the references therein.

\bibliographystyle{amsalpha}
\bibliography{bibliography}

\def\cprime{$'$} \def\cprime{$'$}
\providecommand{\bysame}{\leavevmode\hbox to3em{\hrulefill}\thinspace}
\providecommand{\MR}{\relax\ifhmode\unskip\space\fi MR }
% \MRhref is called by the amsart/book/proc definition of \MR.
\providecommand{\MRhref}[2]{%
  \href{http://www.ams.org/mathscinet-getitem?mr=#1}{#2}
}
\providecommand{\href}[2]{#2}
\begin{thebibliography}{Rum01}

\bibitem[BHT20]{BHT20}
Thomas Brüstle, Souheila Hassoun, and Aran Tattar, \emph{Intersections, sums,
  and the {J}ordan-{H}\"older property for exact categories},
  preprint arXiv:2006.03505 (2020), 1--34.

\bibitem[HR19]{HR19}
Souheila Hassoun and Sunny Roy, \emph{Admissible intersection and sum
  property},
  preprint arXiv:1906.03246 (2019), 1--14.

\bibitem[Kel90]{MR1052551}
Bernhard Keller, \emph{Chain complexes and stable categories}, Manuscripta
  Math. \textbf{67} (1990), no.~4, 379--417. \MR{MR1052551 (91h:18006)}

\bibitem[Lau83]{MR726427}
G.~Laumon, \emph{Sur la cat\'egorie d\'eriv\'ee des {$\mathcal{D}$}-modules
  filtr\'es}, Algebraic geometry (Tokyo/Kyoto, 1982), Lecture Notes in Math.,
  vol. 1016, Springer, Berlin, 1983, pp.~151--237. \MR{MR726427 (85d:32022)}

\bibitem[Pre10]{previdi-sato}
Luigi Previdi, \emph{Sato {G}rassmannians for generalized {T}ate spaces},
  preprint arXiv:1002.4863 (2010), 1--57.

\bibitem[Rum01]{MR1856638}
Wolfgang Rump, \emph{Almost abelian categories}, Cahiers Topologie G\'eom.
  Diff\'erentielle Cat\'eg. \textbf{42} (2001), no.~3, 163--225. \MR{MR1856638
  (2002m:18008)}

\bibitem[Sch99]{MR1779315}
Jean-Pierre Schneiders, \emph{Quasi-abelian categories and sheaves}, M\'em.
  Soc. Math. Fr. (N.S.) (1999), no.~76, vi+134. \MR{MR1779315 (2001i:18023)}

\end{thebibliography}

\end{document}